\newtheorem{theorem}{Theorem}
\newtheorem{lemma}{Lemma}
\begin{document}

\baselineskip=17pt

\title{\bf Diophantine approximation by  Piatetski-Shapiro primes}

\author{\bf S. I. Dimitrov}
\date{2020}
\maketitle
\begin{abstract}
Let $[\, \cdot\,]$ be the floor function.
In this paper we show that whenever $\eta$ is real,
the constants $\lambda _i$ satisfy some necessary conditions,
then for any fixed $1<c<38/37$ there exist  infinitely many prime
triples $p_1,\, p_2,\, p_3$ satisfying the inequality
\begin{equation*}
|\lambda _1p_1 + \lambda _2p_2 + \lambda _3p_3+\eta|<(\max p_j)^{{\frac{37c-38}{26c}}}(\log\max p_j)^{10}
\end{equation*}
and such that $p_i=[n_i^c]$, $i=1,\,2,\,3$.\\
\quad\\
\textbf{Keywords}:  Diophantine approximation , Piatetski-Shapiro primes.\\
\quad\\
{\bf  2000 Math.\ Subject Classification}: 11D75  $\cdot$  11P32
\par
\end{abstract}
\section{Introduction and statement of the result}
\indent

Let $\mathbb{P}$ denotes the set of all prime numbers.
In 1953 Piatetski-Shapiro \cite{Shapiro} showed that for any fixed $\gamma\in(11/12,1)$ the set
\begin{equation*}
\mathbb{P}_\gamma=\{p\in\mathbb{P}\;\;|\;\; p= [n^{1/\gamma}]\;\; \mbox{ for some } n\in \mathbb{N}\}
\end{equation*}
is infinite.
The prime numbers of the form $p = [n^{1/\gamma}]$ are called Piatetski-Shapiro primes of type $\gamma$.
Subsequently  the interval for $\gamma$ was sharpened many times and the best result
up to now belongs to Rivat and Wu \cite{Rivat-Wu} for $\gamma\in(205/243,1)$.

Twenty years later Vaughan \cite{Vaughan} proved that whenever $\delta>0$,
$\eta$ is real and constants $\lambda_i$ satisfy some conditions,
there are infinitely many prime triples $p_1,\,p_2,\,p_3$ such that
\begin{equation}\label{Inequality1}
|\lambda_1p_1+\lambda_2p_2+\lambda_3p_3+\eta|<(\max p_j)^{-\xi+\delta}
\end{equation}
for $\xi=1/10$.
Latter the upper bound for $\xi $ was improved several times
and the best result up to now is due to K. Matom\"{a}ki \cite{Matomaki} with $\xi=2/9$.

In order to establish our result we solve the inequality \eqref{Inequality1}
with Piatetski-Shapiro primes. Thus we prove the following theorem.

\begin{theorem}\label{Theorem}
Suppose that $\lambda_1,\lambda_2,\lambda_3$ are non-zero real numbers, not all of
the same sign, $\eta$ is real, $\lambda_1/\lambda_2$ is irrational
and $\gamma$ be fixed with $37/38<\gamma<1$.
Then there exist infinitely many ordered triples of Piatetski-Shapiro primes $p_1,\,p_2,\,p_3$
of type $\gamma$ such that
\begin{equation*}
|\lambda_1p_1+\lambda_2p_2+\lambda_3p_3+\eta|<(\max p_j)^{^{\frac{37-38\gamma}{26}}}(\log\max p_j)^{10}\,.
\end{equation*}
\end{theorem}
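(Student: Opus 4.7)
The plan is to establish the theorem via the Davenport--Heilbronn variant of the circle method, adapted to Piatetski--Shapiro primes. Write $X=\max p_j$ and detect the indicator of $\mathbb P_\gamma$ through the identity
\[
\mathbf 1_{\mathbb P_\gamma}(p)=\lfloor-p^\gamma\rfloor-\lfloor-(p+1)^\gamma\rfloor=\gamma p^{\gamma-1}+\psi(-(p+1)^\gamma)-\psi(-p^\gamma)+O(p^{\gamma-2}),
\]
where $\psi(t)=\{t\}-1/2$, and introduce the exponential sum
\[
S_\gamma(\alpha)=\sum_{p\le X}\mathbf 1_{\mathbb P_\gamma}(p)(\log p)\,e(\alpha p).
\]
Choose a smooth Fej\'er-type kernel $K(\alpha)$ whose Fourier transform is essentially the characteristic function of an interval of length $\delta:=X^{(37-38\gamma)/26}(\log X)^{10}$; producing infinitely many Piatetski--Shapiro prime triples that satisfy the desired inequality then reduces to a lower bound for
\[
J=\int_{-\infty}^\infty S_\gamma(\lambda_1\alpha)S_\gamma(\lambda_2\alpha)S_\gamma(\lambda_3\alpha)K(\alpha)\,e(\eta\alpha)\,d\alpha.
\]

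First I would split $\mathbb R$ into a major arc $\mathfrak M=\{|\alpha|\le\tau\}$, a minor arc $\mathfrak m=\{\tau<|\alpha|\le T\}$, and a trivial tail $|\alpha|>T$, with $\tau$ and $T$ chosen to balance the estimates below. On $\mathfrak M$ the approximation of $\psi$ by a short Vaaler polynomial together with the prime number theorem reduce $S_\gamma(\alpha)$ to $\gamma\int_2^X t^{\gamma-1}e(\alpha t)\,dt$ up to an acceptable error, after which a classical Davenport--Heilbronn computation, using the irrationality of $\lambda_1/\lambda_2$ and the not-all-same-sign hypothesis, extracts the expected main term of order $\delta X^{3\gamma-1}(\log X)^{-3}$. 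On the trivial tail the rapid decay of $K$ defeats any polynomial bound for $S_\gamma$.

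The decisive step is the minor-arc estimate, where I must prove
\[
\sup_{\alpha\in\mathfrak m}|S_\gamma(\alpha)|\ll X^{\gamma-\theta}(\log X)^{-A}
\]
for the largest admissible $\theta>0$. After expanding $\psi$ via Vaaler the problem becomes bounding oscillatory sums of the shape
\[
\sum_{h\neq 0}\frac{c_h}{h}\sum_{p\le X}(\log p)\,e\bigl(hp^\gamma+\alpha p\bigr),
\]
which I would decompose through Heath-Brown's identity into Type I and Type II bilinear sums
\[
\sum_{m\sim M}\sum_{n\sim N}a_m b_n\,e\bigl(h(mn)^\gamma+\alpha mn\bigr),
\]
and then treat by Weyl/van der Corput iterations combined with exponent-pair estimates tuned to the nonlinear phase $h(mn)^\gamma$. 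Carefully optimising the ranges of $M,N$ against the parameters $\tau,T$ and the size of $|h|$ is what forces the restriction $\gamma>37/38$ and produces precisely the exponent $(37-38\gamma)/26$.

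Finally, the major-arc main term of order $\delta X^{3\gamma-1}(\log X)^{-3}$ beats the combined contribution of the minor arc and the trivial tail for $\delta$ of the chosen size, so $J>0$ for all sufficiently large $X$, which yields infinitely many Piatetski--Shapiro prime triples of type $\gamma$ verifying the theorem. The hard part throughout is the minor-arc bound for $S_\gamma$: the Piatetski--Shapiro condition produces oscillatory factors $e(hp^\gamma)$ that must be analysed jointly with the linear phase $e(\alpha p)$, and it is the interplay of these two frequencies that dictates both the admissible range of $\gamma$ and the savings $\theta$.
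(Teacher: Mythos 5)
Your overall framework (the Davenport--Heilbronn integral, detection of $\mathbb{P}_\gamma$ via $[-p^\gamma]-[-(p+1)^\gamma]$, the three-way splitting of the $\alpha$-line) matches the paper, but the step you yourself call decisive is formulated in a way that cannot work. You propose a pointwise bound $\sup_{\alpha\in\mathfrak{m}}|S_\gamma(\alpha)|\ll X^{\gamma-\theta}(\log X)^{-A}$ over the entire intermediate range $\tau<|\alpha|\le T$. No such bound exists: $\mathfrak{m}$ is a full interval, so it contains points $\alpha$ at which $\lambda_1\alpha$ (say) is arbitrarily close to a rational with small denominator, and there $S_\gamma(\lambda_1\alpha)$ has essentially full size $\asymp X^\gamma$. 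This is exactly where the irrationality of $\lambda_1/\lambda_2$ must be used (you instead invoke it on the major arc, where it plays no role; there one only needs that the $\lambda_i$ are not all of the same sign to make the main term positive). The device the paper uses, following Vaughan, is to bound the \emph{minimum} $\min\{|S(\lambda_1t)|,|S(\lambda_2t)|\}$: fix a convergent $a_0/q_0$ of $\lambda_1/\lambda_2$, set $X=q_0^{13/6}$, and show by a rational-approximation argument that for every $t$ with $\Delta\le|t|\le H$ at least one of $\lambda_1t,\lambda_2t$ admits a rational approximation with denominator in $[X^{1/13},X^{12/13}]$, so that Vinogradov's estimate applies to at least one of the two sums. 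This works only along the sequence $X_j$ attached to the convergents --- which is precisely why the theorem claims infinitely many triples rather than a count for all large $X$ --- and it must then be combined with an $L^2$ mean-value bound over $\Delta\le|t|\le H$ for the factors that are not small pointwise. Your sketch contains neither the minimum trick nor the passage to a subsequence, and without them the minor-arc contribution cannot be controlled.

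A secondary remark: the genuinely hard exponential-sum input --- the uniform bound $\Omega(\alpha,X)\ll X^{(37-12\gamma)/26}\log^5X$ for the $\psi$-part of the Piatetski--Shapiro detector --- is in the paper imported wholesale from an earlier paper of the author, and on the middle range the linear-phase sum is handled by the classical Vinogradov/Vaughan estimate cited from Iwaniec--Kowalski. Your plan to derive the nonlinear-phase bound via Vaaler's approximation, Heath-Brown's identity and exponent pairs is the standard route, but it is left entirely schematic, and it is that estimate (not an optimisation of $\tau$ and $T$) which produces the thresholds $37/38$ and $(37-38\gamma)/26$.
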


\section{Notations}
\indent

The letter $p$ will always denote prime number.
As usual $[t]$ and $\{t\}$ we denote the integer part of $x$ and the fractional part of $x$.
Moreover $\psi(t)=\{t\}-1/2$. We write  $e(t)$=exp($2\pi it$).
Let $\gamma$ be a real constant such that $37/38<\gamma<1$.
Since $\lambda_1/\lambda_2$ is irrational, there are infinitely many different convergents
$a_0/q_0$ to its continued fraction, with
\begin{equation}\label{lambda12a0q0}
\bigg|\frac{\lambda_1}{\lambda_2} - \frac{a_0}{q_0}\bigg|<\frac{1}{q_0^2}\,,\quad (a_0, q_0) = 1\,,\quad a_0\neq0
\end{equation}
and $q_0$ is arbitrary large.
Denote
\begin{align}
\label{X}
&X=q_0^{13/6}\,;\\
\label{Delta}
&\Delta=X^{-\frac{12}{13}}\log X\,;\\
\label{varepsilon}
&\varepsilon=X^{^{\frac{37-38\gamma}{26}}}\log^{10}X\,;\\
\label{H}
&H=\frac{\log^2X}{\varepsilon}\,;\\
\label{SalphaX}
&S(\alpha,X)=\sum\limits_{\lambda_0X<p\leq X\atop{p\in\mathbb{P}_\gamma}}
p^{1-\gamma}e(\alpha p)\log p\,,\quad 0<\lambda_{0}<1\,;\\
\label{SigmaalphaX}
&\Sigma(\alpha,X)=\gamma\sum\limits_{\lambda_0X<p\leq X}e(\alpha p)\log p\,;\\
\label{OmegaalphaX}
&\Omega(\alpha,X)=\sum\limits_{\lambda_0X<p\leq X}p^{1-\gamma}\big(\psi(-(p+1)^\gamma)-\psi(-p^\gamma)\big)e(\alpha p)\log p\,;\\
\label{IalphaX}
&I(\alpha,X)=\gamma\int\limits_{\lambda_0X}^{X}e(\alpha y)dy\,.
\end{align}

\newpage
\section{Preliminary lemmas}
\indent

\begin{lemma}\label{Fourier} Let $\varepsilon>0$ and $k\in \mathbb{N}$.
There exists a function $\theta(y)$ which is $k$ times continuously differentiable and
such that
\begin{align*}
&\theta(y)=1\quad\quad\quad\mbox{for }\quad\quad|y|\leq 3\varepsilon/4\,;\\
&0<\theta(y)<1\quad\mbox{for}\quad3\varepsilon/4 <|y|< \varepsilon\,;\\
&\theta(y)=0\quad\quad\quad\mbox{for}\quad\quad|y|\geq \varepsilon\,.
\end{align*}
and its Fourier transform
\begin{equation*}
\Theta(x)=\int\limits_{-\infty}^{\infty}\theta(y)e(-xy)dy
\end{equation*}
satisfies the inequality
\begin{equation*}
|\Theta(x)|\leq\min\bigg(\frac{7\varepsilon}{4},\frac{1}{\pi|x|},\frac{1}{\pi |x|}
\bigg(\frac{k}{2\pi |x|\varepsilon/8}\bigg)^k\bigg)\,.
\end{equation*}
\end{lemma}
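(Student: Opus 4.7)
The plan is to build $\theta$ by smoothing the characteristic function of an interval against several narrow box kernels, a standard Vinogradov-style construction. Concretely, set $\delta = \varepsilon/(8k)$ and $\phi(y) = (2\delta)^{-1}\mathbf{1}_{[-\delta,\delta]}(y)$, and define
\[
\theta \;=\; \mathbf{1}_{[-7\varepsilon/8,\,7\varepsilon/8]} \,*\, \underbrace{\phi * \cdots * \phi}_{k \text{ factors}}.
\]
Since $\phi\ge 0$ has integral $1$ and support contained in $[-\delta,\delta]$, the three pointwise claims follow directly from basic properties of convolution: the support of $\theta$ is $[-\,7\varepsilon/8 - k\delta,\; 7\varepsilon/8 + k\delta] = [-\varepsilon,\varepsilon]$; on $|y|\le 7\varepsilon/8 - k\delta = 3\varepsilon/4$ the convolution averages $\mathbf{1}_{[-7\varepsilon/8,\,7\varepsilon/8]}$ over a window where it is identically $1$, so $\theta(y)=1$; and the strict inequalities in the transition annulus follow from the strict positivity of $\phi$ on its support. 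Iterated convolution with a bounded, compactly supported density raises the differentiability class by one at each step, so (allowing one extra convolution factor if one insists on exactly $C^k$) the requisite smoothness comes for free.

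For the Fourier transform I would use $\widehat{f*g}=\widehat f\,\widehat g$ and compute
\[
\Theta(x) \;=\; \frac{\sin(7\pi\varepsilon x/4)}{\pi x}\,\left(\frac{\sin(2\pi\delta x)}{2\pi\delta x}\right)^{\!k}.
\]
The first minimand, $|\Theta(x)|\le 7\varepsilon/4$, is the trivial estimate $|\Theta(x)|\le \|\theta\|_1 = 7\varepsilon/4$, valid because $\theta\ge 0$ and each $\phi$ has unit mass, so $\int\theta = \int\mathbf{1}_{[-7\varepsilon/8,\,7\varepsilon/8]} = 7\varepsilon/4$. The second, $|\Theta(x)|\le 1/(\pi|x|)$, follows from $|\sin|\le 1$ on the leading factor together with $|\mathrm{sinc}|\le 1$ on each of the $k$ inner factors. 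For the third, bounding every $\sin$ by $1$ gives
\[
|\Theta(x)| \;\le\; \frac{1}{\pi|x|}\cdot\frac{1}{(2\pi\delta|x|)^{k}} \;=\; \frac{1}{\pi|x|}\left(\frac{k}{2\pi|x|\varepsilon/8}\right)^{k},
\]
after substituting $\delta = \varepsilon/(8k)$.

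There is no genuine obstacle in this argument; the whole thing is bookkeeping. The only point that needs a moment of thought is calibrating $\delta$ so that the total support inflation $k\delta = \varepsilon/8$ sends $7\varepsilon/8$ out to exactly $\varepsilon$ (matching the stated support) while simultaneously making the product of sinc factors decay in precisely the form $(k/(2\pi|x|\varepsilon/8))^k$ appearing on the right-hand side.
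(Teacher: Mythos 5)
Your construction is correct and is essentially the proof behind the paper's citation: the paper simply defers to Tolev's Lemma~1, which is exactly this Vinogradov-style smoothing of $\mathbf{1}_{[-7\varepsilon/8,\,7\varepsilon/8]}$ by $k$ normalized boxes of half-width $\varepsilon/(8k)$, and your support, positivity and all three Fourier bounds check out. The only blemish is the one you already flag --- $k$ box factors give $C^{k-1}$ rather than $C^{k}$, and inserting an extra factor changes the third minimand's exponent from $k$ to $k+1$ --- but this off-by-one is harmless here (that bound is only ever used with $k=[\log X]$ and $2\pi|x|\varepsilon/8\gg k$, where either version suffices) and is inherited from the cited statement itself.
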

\begin{proof}
See (Lemma 1, \cite{Tolev}).
\end{proof}

\begin{lemma}\label{SigmaIasympt} Let $|\alpha|\leq\Delta$.
Then for the sum denoted by \eqref{SigmaalphaX} and the integral denoted by \eqref{IalphaX}
the asymptotic formula
\begin{equation*}
\Sigma(\alpha,X)=I(\alpha,X)+\mathcal{O}\left(\frac{X}{e^{(\log X)^{1/5}}}\right)
\end{equation*}
holds.
\end{lemma}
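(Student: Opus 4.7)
The plan is to reduce the sum to the Chebyshev function $\vartheta(y) = \sum_{p\le y}\log p$ by Abel summation and then to apply the prime number theorem. Stieltjes integration by parts gives
\begin{equation*}
\sum_{\lambda_0 X < p \leq X} e(\alpha p)\log p = e(\alpha X)\vartheta(X) - e(\alpha \lambda_0 X)\vartheta(\lambda_0 X) - 2\pi i\alpha \int_{\lambda_0 X}^{X} e(\alpha y)\vartheta(y)\,dy.
\end{equation*}
Writing $\vartheta(y) = y + R(y)$ with $R$ the PNT remainder and undoing the integration by parts on the linear piece reproduces $\int_{\lambda_0 X}^X e(\alpha y)\,dy$, which multiplied by $\gamma$ is exactly $I(\alpha,X)$. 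So only the contribution of $R(y)$ must be bounded.

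The boundary pieces are easy: the classical prime number theorem $R(y)\ll y\exp(-c(\log y)^{1/2})$ makes them $O(X\exp(-c(\log X)^{1/2}))$, comfortably inside the claimed error. The delicate term is the oscillatory integral $-2\pi i\alpha \int_{\lambda_0 X}^X e(\alpha y) R(y)\,dy$, because a trivial pointwise estimate on $R$ carries an extra factor $|\alpha|X$ that can reach $X^{1/13}\log X$ and is not absorbed by the PNT saving alone. To extract cancellation from the phase $e(\alpha y)$ I would insert the truncated explicit formula
\begin{equation*}
R(y) = -\sum_{|\tau|\le T} \frac{y^{\rho}}{\rho} + O\!\left(\frac{X\log^2 X}{T}\right),
\end{equation*}
the sum running over non-trivial zeros $\rho = \beta + i\tau$ of $\zeta$, bound each integral $\int y^{\rho}e(\alpha y)\,dy$ by integration by parts against the phase derivative $2\pi\alpha + \tau/y$, and sum the resulting estimates using the Vinogradov--Korobov zero-free region. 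Optimising the truncation height $T$ as a small power of $X$ then yields an overall bound of $O(X\exp(-(\log X)^{1/5}))$ on this integral.

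The main obstacle is precisely this last step: elementary partial summation overshoots by the factor $|\alpha|X$, so the argument genuinely needs the zero-free region of $\zeta$ (equivalently, a Davenport--Heilbronn style major-arc estimate for $\sum_{n\le X}\Lambda(n)e(\alpha n)$) to recover the required cancellation. Once that ingredient is in place, collecting the pieces and using that $\gamma = O(1)$ delivers the claimed asymptotic uniformly in $|\alpha|\le\Delta$.
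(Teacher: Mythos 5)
The paper gives no proof of this lemma at all: it defers entirely to Lemma 14 of Tolev \cite{Tolev}, whose argument is essentially the route you outline (explicit formula, oscillation of $e(\alpha y)$ against $y^{i\tau}$, and the standard machinery for sums over zeros). Your reduction to $\vartheta$, the recovery of the main term by undoing the integration by parts, and above all your diagnosis that naive partial summation loses a factor $1+|\alpha|X\ll X^{1/13}\log X$ which no form of the prime number theorem for long intervals can absorb, are exactly right; that is the real content of the lemma.

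As written, though, your plan has a gap at the final summation over zeros. The truncation error $O(X\log^2X/T)$ also picks up the factor $|\alpha|X$ under partial summation, which forces $T$ to be at least a small positive power of $X$ (roughly $T\ge X^{1/13+\varepsilon}$). There are then $\gg T\log T$ zeros with $|\tau|\le T$, and the Vinogradov--Korobov region only gives $\beta\le 1-c(\log T)^{-2/3}(\log\log T)^{-1/3}$, so the bound that the zero-free region alone supplies, $\sum_{|\tau|\le T}X^{\beta}\le X^{1-\delta_0}T\log T$, exceeds $X$ and the argument does not close. You must also invoke a zero-density estimate $N(\sigma,T)\ll T^{A(1-\sigma)}\log^{B}T$ (Ingham's is enough) to control the many zeros of small real part; combined with the zero-free region this yields $\sum_{|\tau|\le T}X^{\beta}\ll X\exp\left(-c(\log X)^{1/3-\varepsilon}\right)$, which is what actually produces the $\exp(-(\log X)^{1/5})$ saving. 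A smaller point: integration by parts against the phase derivative $2\pi\alpha+\tau/y$ degenerates for zeros with $\mathrm{sgn}\,\tau=-\mathrm{sgn}\,\alpha$ and $|\tau|\asymp|\alpha|X$, where the phase is stationary inside $[\lambda_0X,X]$; for those zeros the trivial bound $X^{1+\beta}$ on the integral already suffices, since the prefactor $|\alpha|/|\rho|$ is then $\asymp 1/X$, but the case distinction must be made explicitly.
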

\begin{proof}
This lemma is very similar to result of Tolev  \cite{Tolev}.
Inspecting the arguments presented in (\cite{Tolev}, Lemma 14),
the reader will easily see that the proof of Lemma \ref{SigmaIasympt}
can be obtained by the same way.
\end{proof}

\begin{lemma}\label{Omegaest}
For the sum denoted by \eqref{OmegaalphaX} the upper bound
\begin{equation*}
\Omega(\alpha,X)\ll X^{\frac{37-12\gamma}{26}}\log^5X\,.
\end{equation*}
holds.
\end{lemma}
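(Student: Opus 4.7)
The plan is to reduce $\Omega(\alpha,X)$ to standard exponential sums over primes and then estimate these by Vaughan's identity together with van der Corput's method, as is customary in Piatetski-Shapiro problems. First I would apply the truncated Fourier expansion
\[
\psi(y) = -\sum_{1\leq |h|\leq H_1}\frac{e(hy)}{2\pi i h} + R_{H_1}(y)
\]
for a parameter $H_1$ to be chosen, where $R_{H_1}$ is handled via a Vaaler-type approximation so that its contribution reduces to the same family of exponential sums with a slightly enlarged range. Using $(p+1)^\gamma - p^\gamma = \gamma p^{\gamma-1}(1+O(p^{-1}))$ and
\[
e(-h(p+1)^\gamma) - e(-hp^\gamma) = -2\pi i h\gamma p^{\gamma-1}e(-hp^\gamma)\big(1+O(hp^{\gamma-1}+p^{-1})\big),
\]
valid for $h \ll X^{1-\gamma}$, the weight $p^{1-\gamma}$ in the definition of $\Omega$ cancels exactly against $p^{\gamma-1}$ from the expansion, and the coefficient $1/(2\pi i h)$ cancels the factor $-2\pi i h$; hence
\[
|\Omega(\alpha,X)| \ll \sum_{1\leq h\leq H_1}\bigg|\sum_{\lambda_0 X<p\leq X} e(\alpha p - hp^\gamma)\log p\bigg| + E(H_1),
\]
where $E(H_1)$ collects the Fourier tail and the higher-order terms of the expansion and will be absorbed into the final bound by the choice of $H_1$.

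Second, for each fixed $h\in[1,H_1]$, I would bound the inner prime sum $W_h(\alpha)=\sum_{\lambda_0 X<p\leq X} e(\alpha p - h p^\gamma)\log p$ via Vaughan's identity, which decomposes it (up to $\log$-losses) into Type I sums $\sum_{m\leq M}a_m\sum_n e(\alpha mn - h(mn)^\gamma)$ and Type II sums $\sum_{m\sim M}\sum_{n\sim N}a_m b_n e(\alpha mn - h(mn)^\gamma)$, for suitable splitting parameters $M,N$ chosen from dyadic ranges in $[1,X^{1/2}]$. The Type I sums are estimated by applying an exponent-pair (or van der Corput) bound to the inner sum in $n$ for the phase $F(n)=\alpha mn - hm^\gamma n^\gamma$, whose relevant derivative satisfies $|F''(n)|\asymp h m^\gamma n^{\gamma-2}$. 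The Type II sums are treated by Cauchy--Schwarz followed by a Weyl shift in $n$ and a further exponent-pair estimate on the resulting phase, yielding a power-saving bound polynomial in $h$, $M$ and $N$.

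The main obstacle lies in the Type II estimate, where the optimal choice of exponent pair and Vaughan ranges produces the specific exponent $(37-12\gamma)/26$; the integers $37$, $12$, $26$ arise precisely from balancing the $AB$- and $BA$-processes of exponent-pair theory in the range of parameters relevant here, analogously to the treatments of Piatetski-Shapiro sums in the work of Tolev and of Rivat--Wu. Once these Type I/II estimates are in hand, summing $|W_h|$ over $1\leq h\leq H_1$ costs at most a factor $\log H_1\ll \log X$, and combined with the Vaughan logarithmic losses accounts for the $\log^5 X$ factor. Balancing the error $E(H_1)$ from the Fourier tail against the Type I/II bounds fixes $H_1$, and the claimed inequality $\Omega(\alpha,X)\ll X^{(37-12\gamma)/26}\log^5 X$ then follows.
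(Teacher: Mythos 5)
The paper offers no argument for this lemma at all---its ``proof'' is a one-line citation to equation (36) of \cite{Dimitrov}---so the only comparison available is with the standard treatment that such a citation stands for, and your overall strategy (Vaaler expansion of $\psi$, differencing at $-(p+1)^\gamma$ and $-p^\gamma$, reduction to prime exponential sums with phase $\alpha p-hp^\gamma$, then Vaughan's identity and exponent pairs) is indeed that standard route. However, as written your reduction step does not close. After multiplying by the Fourier coefficient $1/(2\pi i h)$ and the weight $p^{1-\gamma}$, the multiplicative error $O(hp^{\gamma-1})$ in your expansion of $e(-h(p+1)^\gamma)-e(-hp^\gamma)$ contributes
\begin{equation*}
\ll \sum_{1\le h\le H_1}\sum_{\lambda_0X<p\le X}hp^{\gamma-1}\log p\ \ll\ H_1^2X^{\gamma}\log X
\end{equation*}
to $\Omega(\alpha,X)$. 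Since $\gamma>37/38$ is exactly the condition $(37-12\gamma)/26<\gamma$, this already exceeds the target $X^{(37-12\gamma)/26}\log^5X$ when $H_1=1$, while controlling the Vaaler tail forces $H_1$ to be a positive power of $X$; no choice of $H_1$ ``absorbs'' your $E(H_1)$. The standard repair is to avoid the Taylor expansion entirely and use the exact identity
\begin{equation*}
\frac{e(-h(p+1)^\gamma)-e(-hp^\gamma)}{2\pi i h}=-\gamma\int_0^1 (p+u)^{\gamma-1}e\big(-h(p+u)^\gamma\big)\,du\,,
\end{equation*}
so that $p^{1-\gamma}$ times the left-hand side equals $-\gamma\int_0^1 e(-h(p+u)^\gamma)\,du+O(1/p)$ with an error independent of $h$; the reduction then goes through with a harmless shift $u$ in the phase and a total error $O(H_1\log^2X)$.

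The second and more serious gap is that the entire quantitative content of the lemma---that the Type I/II decomposition with an optimally chosen exponent pair, summed over $h\le H_1$ and balanced against the tail $X^{2-\gamma}\log X/H_1$, yields precisely the exponent $(37-12\gamma)/26$---is asserted rather than carried out. You name this ``the main obstacle'' and then appeal to analogy with Tolev and Rivat--Wu instead of exhibiting the exponent pair, the ranges of $M$ and $N$, and the balancing that produce $37$, $12$ and $26$. Note also an internal inconsistency: having arranged for the $1/(2\pi ih)$ to cancel against $-2\pi i h$, your sums $W_h$ carry no $1/h$ weight, so summing over $h\le H_1$ costs a factor $H_1$ times the supremum (or requires an explicit polynomial dependence on $h$ in the Type I/II bounds), not the factor $\log H_1$ you claim. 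Tracking that $h$-dependence is precisely where the final exponent comes from, so without it the proposal is a plausible plan rather than a proof.
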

\begin{proof}
It follows by the same argument used  in (\cite{Dimitrov}, (36)).
\end{proof}

\begin{lemma}\label{Expsumest}
Suppose that $\alpha \in \mathbb{R}$,\, $a \in \mathbb{Z}$,\, $q\in \mathbb{N}$,\,
$\big|\alpha-\frac{a}{q}\big|\leq\frac{1}{q^2}$\,, $(a, q)=1$.

Let
\begin{equation*}
\Psi(X)=\sum\limits_{p\le X}e(\alpha p)\log p\,.
\end{equation*}
Then
\begin{equation*}
\Psi(X)\ll \Big(Xq^{-1/2}+X^{4/5}+X^{1/2}q^{1/2}\Big)\log^4X\,.
\end{equation*}
\end{lemma}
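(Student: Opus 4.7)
\medskip

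\noindent\textbf{Proof plan for Lemma \ref{Expsumest}.}
This is a standard Vinogradov-type bound for exponential sums over primes in major-arc form. My plan is to reduce $\Psi(X)$ to the von Mangoldt sum $\sum_{n\le X}\Lambda(n)e(\alpha n)$, apply Vaughan's identity, and estimate the resulting Type I and Type II sums by means of the rational approximation to $\alpha$.

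First I would pass from primes to $\Lambda(n)$. Writing $\Psi(X)=\sum_{n\le X}\Lambda(n)e(\alpha n)+E(X)$, the contribution of prime powers $p^k$ with $k\ge 2$ is trivially $\ll X^{1/2}\log X$, which is absorbed in the stated bound. Next I would apply Vaughan's identity with parameters $U=V=X^{2/5}$. This decomposes $\sum_{n\le X}\Lambda(n)e(\alpha n)$ into four pieces; two of them are Type I sums of the shape
\begin{equation*}
S_I=\sum_{m\le M}a_m\sum_{m<n\le X/m}e(\alpha mn)\log n,\qquad a_m\ll\log m,
\end{equation*}
with $M\le V^2=X^{4/5}$, while the remaining piece is a Type II sum
\begin{equation*}
S_{II}=\sum_{U<m\le X/V}a_m\sum_{V<n\le X/m}b_n\,e(\alpha mn),\qquad a_m,b_n\ll\log mn.
\end{equation*}

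For the Type I sums I would execute the inner geometric series to obtain
$\sum_{n\le X/m}e(\alpha mn)\ll\min(X/m,\,\|\alpha m\|^{-1})$, and then use the classical estimate
\begin{equation*}
\sum_{m\le M}\min\!\Bigl(\tfrac{X}{m},\,\tfrac{1}{\|\alpha m\|}\Bigr)\ll\Bigl(\tfrac{X}{q}+M+q\Bigr)\log(2qX),
\end{equation*}
which follows from splitting the range into blocks of length $q$ and using $|\alpha-a/q|\le 1/q^2$. Choosing $M\le X^{4/5}$ yields a contribution of the required size $(Xq^{-1/2}+X^{4/5}+X^{1/2}q^{1/2})\log^3 X$, with room to spare.

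The main obstacle is the Type II sum $S_{II}$. Here I would apply Cauchy--Schwarz in the $m$-variable, expand the square and swap summations so that the inner sum becomes $\sum_m e(\alpha m(n_1-n_2))$. Bounding this by $\min(X/m,\,\|\alpha(n_1-n_2)\|^{-1})$ and again invoking the block estimate above gives, after the usual dyadic analysis in $N=X/m$ with $V<N<X/U$, a bound of order
\begin{equation*}
\bigl(Xq^{-1/2}+X^{4/5}+X^{1/2}q^{1/2}\bigr)\log^{7/2}X.
\end{equation*}
The dyadic decomposition introduces one extra logarithmic factor, giving the final exponent $\log^4 X$. Collecting the contributions of all Type I and Type II pieces and of the prime-power remainder produces the claimed bound.
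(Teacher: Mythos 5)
Your proposal is correct and is essentially the standard proof of the result the paper invokes: the paper gives no argument of its own here, simply citing Iwaniec--Kowalski, Theorem 13.6, whose proof is exactly the Vaughan-identity decomposition into Type I and Type II sums that you outline (plus the trivial $O(X^{1/2})$ adjustment for prime powers). So you have reconstructed the proof of the cited theorem rather than diverged from the paper.
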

\begin{proof}
See (\cite{Iwaniec-Kowalski}, Theorem 13.6).
\end{proof}

\section{Outline of the proof}
\indent

Consider the sum
\begin{equation}\label{Gamma}
\Gamma(X)=\sum\limits_{\lambda_0X<p_1,p_2,p_3\leq X\atop{p_i\in\mathbb{P}_\gamma,\, i=1,2,3}}
\theta(\lambda_1p_1+\lambda_2p_2+\lambda_3p_3+\eta)\log p_1\log p_2\log p_3\,.
\end{equation}
Using the inverse Fourier transform for the function $\theta(x)$ we get
\begin{align*}
\Gamma(X)&=\sum\limits_{\lambda_0X<p_1,p_2,p_3\leq X\atop{p_i\in\mathbb{P}_\gamma,\, i=1,2,3}}\log p_1\log p_2\log p_3
\int\limits_{-\infty}^{\infty}\Theta(t)e\big((\lambda_1p_1+\lambda_2p_2+\lambda_3p_3+\eta)t\big)\,dt\\
&=\int\limits_{-\infty}^{\infty}\Theta(t)S(\lambda_1t,X)S(\lambda_2t,X)S(\lambda_3t,X)e(\eta t)\,dt\,.
\end{align*}
We decompose $\Gamma(X)$ as follows
\begin{equation}\label{Gammadecomp}
\Gamma(X)=\Gamma_1(X)+\Gamma_2(X)+\Gamma_3(X)\,,
\end{equation}
where
\begin{align}
\label{Gamma1}
&\Gamma_1(X)=\int\limits_{|t|<\Delta}\Theta(t)S(\lambda_1t,X)S(\lambda_2t,X)S(\lambda_3t,X)e(\eta t)\,dt\,,\\
\label{Gamma2}
&\Gamma_2(X)=\int\limits_{\Delta\leq|t|\leq H}\Theta(t)S(\lambda_1t,X)S(\lambda_2t,X)S(\lambda_3t,X)e(\eta t)\,dt\,,\\
\label{Gamma3}
&\Gamma_3(X)=\int\limits_{|t|>H}\Theta(t)S(\lambda_1t,X)S(\lambda_2t,X)S(\lambda_3t,X)e(\eta t)\,dt\,.
\end{align}
We shall estimate $\Gamma_1(X),\,\Gamma_2(X)$ and $\Gamma_3(X)$, respectively,
in the sections \ref{SectionGamma1}, \ref{SectionGamma2} and \ref{SectionGamma3}.
In section \ref{Sectionfinal} we shall complete the proof of Theorem \ref{Theorem}.

\section{Lower bound  of $\mathbf{\Gamma_1(X)}$}\label{SectionGamma1}
\indent

In order to find the lower bound of $\Gamma_1(X)$ we need to prove the following two lemmas.

\begin{lemma}\label{SIasymptotic}
For the sum denoted by \eqref{SalphaX} and the integral denoted by \eqref{IalphaX}
the asymptotic formula
\begin{equation}\label{S-Iasymptotic}
S(\alpha,X)=I(\alpha,X)+ \mathcal{O}\left(\frac{X}{e^{(\log X)^{1/5}}}\right)
\end{equation}
holds.
\end{lemma}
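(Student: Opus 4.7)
The plan is to reduce $S(\alpha,X)$ to the ``trivial'' sum $\Sigma(\alpha,X)$ of \eqref{SigmaalphaX} plus the oscillatory remainder $\Omega(\alpha,X)$ of \eqref{OmegaalphaX}, and then invoke Lemma \ref{SigmaIasympt} and Lemma \ref{Omegaest} respectively. The starting point is the standard characterization of Piatetski-Shapiro primes: a prime $p$ belongs to $\mathbb{P}_\gamma$ if and only if there exists an integer $n$ with $p^\gamma\le n<(p+1)^\gamma$, and since $(p+1)^\gamma-p^\gamma<1$ for large $p$ this ``indicator'' can be written exactly as $[-p^\gamma]-[-(p+1)^\gamma]$. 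Using $[t]=t-\psi(t)-1/2$ this becomes
\begin{equation*}
\mathbf{1}_{p\in\mathbb{P}_\gamma}=(p+1)^\gamma-p^\gamma+\psi(-(p+1)^\gamma)-\psi(-p^\gamma).
\end{equation*}

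Next I would insert this identity into the definition \eqref{SalphaX} of $S(\alpha,X)$ and use the Taylor expansion
\begin{equation*}
(p+1)^\gamma-p^\gamma=\gamma p^{\gamma-1}+\mathcal{O}(p^{\gamma-2}),
\end{equation*}
so that $p^{1-\gamma}\bigl((p+1)^\gamma-p^\gamma\bigr)=\gamma+\mathcal{O}(p^{-1})$. Splitting the sum according to these two pieces yields
\begin{equation*}
S(\alpha,X)=\Sigma(\alpha,X)+\Omega(\alpha,X)+\mathcal{O}\!\left(\sum_{\lambda_0X<p\le X}\frac{\log p}{p}\right),
\end{equation*}
and the error sum is $\mathcal{O}(\log X)$ by Mertens' theorem, hence negligible compared with $X/e^{(\log X)^{1/5}}$.

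It remains to invoke the two available lemmas. By Lemma \ref{SigmaIasympt}, since $|\alpha|\le\Delta$ (this is the only range of $\alpha$ relevant for the use of Lemma \ref{SIasymptotic} in $\Gamma_1$), we replace $\Sigma(\alpha,X)$ by $I(\alpha,X)$ with an error $\mathcal{O}(X/e^{(\log X)^{1/5}})$. For the term $\Omega(\alpha,X)$, Lemma \ref{Omegaest} gives the bound $\Omega(\alpha,X)\ll X^{(37-12\gamma)/26}\log^5 X$; since $\gamma>37/38$ we have $(37-12\gamma)/26<25/26+\epsilon<1$, so in particular this is $\ll X/e^{(\log X)^{1/5}}$, which is absorbed into the stated error term. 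Combining these estimates delivers exactly \eqref{S-Iasymptotic}.

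There is no real analytical obstacle here: the nontrivial inputs (the Type I/II decompositions behind Lemmas \ref{SigmaIasympt} and \ref{Omegaest}) have already been supplied. The only issue worth care is confirming that the polynomial saving in Lemma \ref{Omegaest} beats the quasi-trivial error $X/e^{(\log X)^{1/5}}$ in the stated range $37/38<\gamma<1$, which it does with substantial room to spare.
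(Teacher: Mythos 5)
Your proposal is correct and follows essentially the same route as the paper: the identity $\mathbf{1}_{p\in\mathbb{P}_\gamma}=[-p^\gamma]-[-(p+1)^\gamma]$, the split into $\Sigma(\alpha,X)+\Omega(\alpha,X)$ plus a logarithmic error via $p^{1-\gamma}\bigl((p+1)^\gamma-p^\gamma\bigr)=\gamma+\mathcal{O}(p^{-1})$, and then Lemmas \ref{SigmaIasympt} and \ref{Omegaest}. Your added remarks (that the range $|\alpha|\le\Delta$ is implicitly needed to invoke Lemma \ref{SigmaIasympt}, and the explicit check that $(37-12\gamma)/26<1$) are correct and, if anything, slightly more careful than the paper's own write-up.
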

\begin{proof}
From \eqref{SalphaX} -- \eqref{OmegaalphaX} we have
\begin{align}\label{SOmegaSigma}
S(\alpha,X)&=\sum\limits_{\lambda_0X<p\leq X}p^{1-\gamma}
\big([-p^\gamma]-[-(p+1)^\gamma]\big)e(\alpha p)\log p\nonumber\\
&=\sum\limits_{\lambda_0X<p\leq X}p^{1-\gamma}\big((p+1)^\gamma-p^\gamma\big)e(\alpha p)\log p\nonumber\\
&+\sum\limits_{\lambda_0X<p\leq X}p^{1-\gamma}
\big(\psi(-(p+1)^\gamma)-\psi(-p^\gamma)\big)e(\alpha p)\log p\nonumber\\
&=\Sigma(\alpha,X)+\Omega(\alpha,X)+\mathcal{O}\big(\log^2X\big)\,.
\end{align}
Bearing in mind  \eqref{SOmegaSigma}, Lemma \ref{SigmaIasympt} and Lemma \ref{Omegaest}
we obtain the asymptotic formula \eqref{S-Iasymptotic}.
\end{proof}

\begin{lemma}\label{intSintI} Let $\lambda\neq0$.
Then for the sum denoted by \eqref{SalphaX} and the integral denoted by \eqref{IalphaX} we have
\begin{align*}
&\emph{(i)}\quad\quad\quad\int\limits_{-\Delta}^\Delta|S(\lambda\alpha,X)|^2\,d\alpha\ll X\log^3X\,,\\
&\emph{(ii)}\quad\quad\quad\int\limits_{-\Delta}^\Delta|I(\lambda\alpha)|^2\,d\alpha\ll X\log X\,,\\
&\emph{(iii)}\quad\quad\quad\int\limits_{0}^1|S(\alpha,X)|^2\,d\alpha\ll X^{2-\gamma}\log^2X\,.
\end{align*}
\end{lemma}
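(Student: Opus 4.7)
Parts (ii) and (iii) are routine, so I would dispatch them first. For (iii), $S(\alpha, X)$ is a trigonometric polynomial in $\alpha$ with coefficients $a_p = p^{1-\gamma}\log p$ supported on $p \in \mathbb{P}_\gamma \cap (\lambda_0 X, X]$, so Parseval gives
\[
\int_0^1|S(\alpha,X)|^2 d\alpha = \sum_p a_p^2 \leq X^{2(1-\gamma)}\log^2 X \cdot \pi_\gamma(X) \ll X^{2-\gamma}\log X,
\]
invoking the Rivat--Wu PNT for PS primes, applicable since $\gamma>37/38>205/243$. For (ii), the explicit evaluation $I(\lambda\alpha, X) = \gamma(e(\lambda\alpha X) - e(\lambda\alpha\lambda_0 X))/(2\pi i \lambda\alpha)$ for $\alpha\neq 0$ yields $|I(\lambda\alpha,X)| \ll \min(X, 1/|\lambda\alpha|)$; substituting into $|I|^2$ and splitting the integration range at $|\lambda\alpha| = 1/X$ gives $\int_{-\Delta}^\Delta |I(\lambda\alpha,X)|^2 d\alpha \ll X$, well inside the stated bound.

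Part (i) is the main difficulty. A naive attempt to combine Lemma \ref{SIasymptotic} with (ii) fails:
\[
\int_{-\Delta}^\Delta |S(\lambda\alpha,X)|^2 d\alpha \ll \int_{-\Delta}^\Delta |I(\lambda\alpha,X)|^2 d\alpha + \Delta\, X^2 e^{-2(\log X)^{1/5}} \ll X + X^{14/13}\log X \cdot e^{-2(\log X)^{1/5}},
\]
and the second summand exceeds $X\log^3 X$ for large $X$, since $X^{1/13}=e^{(\log X)/13}$ outgrows $e^{2(\log X)^{1/5}}$. Instead, I would apply Gallagher's lemma directly to the trigonometric polynomial $S$. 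After a change of variables absorbing the fixed constant $\lambda$, this reduces the task to estimating
\[
\Delta^2 \int_{\mathbb{R}} |A(y)|^2 dy, \qquad A(y) = \sum_{\substack{y < p \leq y + h \\ p \in \mathbb{P}_\gamma \cap (\lambda_0 X, X]}} p^{1-\gamma}\log p, \qquad h = \frac{1}{2\Delta} \asymp \frac{X^{12/13}}{\log X}.
\]
The trivial estimate $|A(y)| \ll X^{1-\gamma}\log X\cdot(\pi_\gamma(y+h) - \pi_\gamma(y))$, combined with a short-interval upper bound $\pi_\gamma(y+h) - \pi_\gamma(y) \ll h y^{\gamma-1}/\log y$ for PS primes at scale $h \asymp X^{12/13}$, gives $|A(y)|^2 \ll h^2$. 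Integrating over $y$ in a range of length $O(X)$ and multiplying by $\Delta^2$ produces $\int_{-\Delta}^\Delta |S|^2 \ll X \ll X\log^3 X$, as desired.

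The real work in (i) is thus the short-interval upper bound for $\pi_\gamma$ at scale $\asymp X^{12/13}$; this is the hard part. The constraint $\gamma > 37/38$ calibrates the scales $\Delta$ and $\varepsilon$ in \eqref{Delta}, \eqref{varepsilon} precisely so that such an estimate becomes available via the standard Piatetski-Shapiro toolkit (Vaaler approximation to $\psi$, followed by Type I/II exponential sum bounds), paralleling the proof of the pointwise bound in Lemma \ref{Omegaest}.
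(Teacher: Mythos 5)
Your proposal is correct but proceeds differently from the paper for part (i). The paper simply expands $|S(\lambda\alpha,X)|^2$ into a double sum over Piatetski-Shapiro primes, integrates term by term to get $\sum\sum\min\big(\Delta,|n_1-n_2|^{-1}\big)$, and controls the off-diagonal sum by passing to the parameters $m_i$ with $n_i=m_i^{1/\gamma}$ and using the mean value theorem, which yields $\Delta X^{2-\gamma}\log^2X+X\log^3X\ll X\log^3X$; your route packages the same first-derivative/large-sieve mechanism into Gallagher's lemma plus a short-interval count. Your diagnosis that the naive route through Lemma \ref{SIasymptotic} fails (because $X^{1/13}$ beats $e^{2(\log X)^{1/5}}$) is accurate, and parts (ii) and (iii) are fine (for (iii) one does not even need Rivat--Wu: the trivial count of at most $O(X^\gamma)$ Piatetski-Shapiro integers up to $X$ already gives $X^{2-\gamma}\log^2X$). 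The one place where you overreach is the final step of (i): you declare the short-interval bound for $\pi_\gamma$ at scale $h\asymp X^{12/13}$ to be ``the hard part,'' requiring Vaaler approximation and Type I/II sums, and you claim it with a $1/\log y$ saving. Neither the saving nor any primality or equidistribution input is needed: an integer $p\in(y,y+h]$ of the form $[n^{1/\gamma}]$ forces an integer $m\in[p^\gamma,(p+1)^\gamma)\subset(y^\gamma,(y+h+1)^\gamma]$, so trivially $\pi_\gamma(y+h)-\pi_\gamma(y)\ll 1+hy^{\gamma-1}$, which gives $|A(y)|\ll h\log X$ and hence $\Delta^2\int|A|^2\ll X\log^2X$, within the target. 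So your argument closes with a triviality where you anticipated the main difficulty; with that substitution it is a complete, and arguably cleaner, alternative to the paper's computation.
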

\begin{proof} We only prove (i). The inequalities (ii) and (iii) can be proved likewise.

Using \eqref{Delta}, \eqref{SalphaX} and  Lagrange's mean value theorem we obtain
\begin{align*}
\int\limits_{-\Delta}^\Delta|S(\lambda\alpha,X)|^2\,d\alpha&=
\sum\limits_{\lambda_0X<p_1,p_2\leq X\atop{p_i\in\mathbb{P}_\gamma,\, i=1,2}}
(p_1p_2)^{1-\gamma}\log p_1\log p_2\int\limits_{-\Delta}^\Delta e(\lambda(p_1-p_2)\alpha)d\alpha\\
&\ll X^{2-2\gamma}(\log X)^2\sum\limits_{\lambda_0X<n_1,n_2\leq X\atop{n_i=m_i^{1/\gamma},\, i=1,2}}
\min\bigg(\Delta,\frac{1}{|n_1-n_2|}\bigg)\\
&\ll\Delta X^{2-\gamma}\log^2X
+X^{2-2\gamma}(\log X)^2\sum\limits_{\lambda_0X<n_1,n_2\leq X\atop{n_i=m_i^{1/\gamma},\, i=1,2
\atop{n_1<n_2}}}\frac{1}{n_2-n_1}\\
&\ll\Delta X^{2-\gamma}\log^2X
+X^{2-2\gamma}(\log X)^2\sum\limits_{(\lambda_0X)^\gamma<m_1,m_2\leq X^\gamma\atop{m_1<m_2}}
\frac{1}{m_2^{1/\gamma}-m_1^{1/\gamma}-1}\\
&\ll\Delta X^{2-\gamma}\log^2X
+X^{1-\gamma}(\log X)^2\sum\limits_{(\lambda_0X)^\gamma<m_1,m_2\leq X^\gamma\atop{m_1<m_2}}
\frac{1}{m_2-m_1}\nonumber\\
&\ll\Delta X^{2-\gamma}\log^2X+X\log^3X\,.\\
&\ll X\log^3X\,.
\end{align*}
The lemma is proved.
\end{proof}

Put
\begin{align}\label{Si}
&S_i=S(\lambda_it)\,,\\
\label{notIi}
&I_i=I(\lambda_it)\,.
\end{align}
We use the identity
\begin{equation}\label{Identity}
S_1S_2S_3=I_1I_2I_3+(S_1-I_1)I_2I_3+S_1(S_2-I_2)I_3+S_1S_2(S_3-I_3)\,.
\end{equation}
Replace
\begin{equation}\label{JX}
J(X)=\int\limits_{|t|<\Delta}\Theta(t)I(\lambda_1t,X)I(\lambda_2t,X)I(\lambda_3t,X)e(\eta t)\,dt\,.
\end{equation}
Now from \eqref{Gamma1}, \eqref{Identity}, \eqref{JX},
Lemma \ref{Fourier}, Lemma \ref{SIasymptotic} and Lemma \ref{intSintI} it follows
\begin{align}\label{Gamma1-JX}
\Gamma_1(X)-J(X)
&=\int\limits_{|t|<\Delta}\Theta(t)\Big(S(\lambda_1t,X)-I(\lambda_1t,X)\Big)
I(\lambda_2t,X)I(\lambda_3t,X)e(\eta t)\,dt\nonumber\\
&+\int\limits_{|t|<\Delta}\Theta(t)S(\lambda_1t,X)
\Big(S(\lambda_2t,X)-I(\lambda_2t,X)\Big)I(\lambda_3t,X)e(\eta t)\,dt\nonumber\\
&+\int\limits_{|t|<\Delta}\Theta(t)S(\lambda_1t,X)
S(\lambda_2t,X)\Big(S(\lambda_3t,X)-I(\lambda_3t,X)\Big)e(\eta t)\,dt\nonumber\\
&\ll\varepsilon \frac{X}{e^{(\log X)^{1/5}}}\Bigg(\int\limits_{|t|<\Delta}
|I(\lambda_2t,X)I(\lambda_3t,X)|\,dt\nonumber\\
&+\int\limits_{|t|<\Delta}|S(\lambda_1t,X)I(\lambda_3t,X)|dt
+\int\limits_{|t|<\Delta}|S(\lambda_1t,X)S(\lambda_2t,X)|dt\Bigg)\nonumber\\
&\ll\varepsilon \frac{X}{e^{(\log X)^{1/5}}}\Bigg(\int\limits_{|t|<\Delta}
|I(\lambda_2t,X)|^2dt+\int\limits_{|t|<\Delta}|I(\lambda_3t,X)|^2dt\nonumber\\
&+\int\limits_{|t|<\Delta}|S(\lambda_1t,X)|^2dt+\int\limits_{|t|<\Delta}|S(\lambda_2t,X)|^2dt\Bigg)\nonumber\\
&\ll\varepsilon\frac{X^2}{e^{(\log X)^{1/6}}}\,.
\end{align}
On the other hand for the integral defined by \eqref{JX} we write
\begin{equation}\label{JXest1}
J(X)=B(X)+\Phi\,,
\end{equation}
where
\begin{equation*}
B(X)=\gamma^3\int\limits_{\lambda_0X}^{X}
\int\limits_{\lambda_0X}^{X}\int\limits_{\lambda_0X}^{X}
\theta(\lambda_1y_1+\lambda_2y_2+\lambda_3y_3+\eta)
\,dy_1\,dy_2\,dy_3
\end{equation*}
and
\begin{equation}\label{Phi}
\Phi\ll\int\limits_{\Delta}^{\infty }|\Theta(t)|
|I(\lambda_1t,X)I(\lambda_2t,X)I(\lambda_3t,X)|\,dt\,.
\end{equation}
According to (\cite{DimTod}, Lemma 4) we have
\begin{equation}\label{Best}
B(X)\gg\varepsilon X^2\,.
\end{equation}
By  \eqref{IalphaX} we get
\begin{equation}\label{IalphaXest}
I(\alpha,X)\ll\frac{1}{|\alpha|}\,.
\end{equation}
Using \eqref{Phi}, \eqref{IalphaXest} and Lemma \ref{Fourier} we deduce
\begin{equation}\label{Phiest}
\Phi\ll\frac{\varepsilon}{\Delta^2}\,.
\end{equation}
Bearing in mind \eqref{Delta}, \eqref{Gamma1-JX}, \eqref{JXest1},
\eqref{Best} and  \eqref{Phiest} we obtain
\begin{equation}\label{Gamma1est}
\Gamma_1(X)\gg\varepsilon X^2\,.
\end{equation}

\section{Upper bound of $\mathbf{\Gamma_2(X)}$}\label{SectionGamma2}
\indent

Suppose that
\begin{equation*}
\bigg|\alpha -\frac{a}{q}\bigg|\leq\frac{1}{q^2}\,,\quad (a, q)=1
\end{equation*}
with
\begin{equation}\label{Intq}
q\in\left[X^{\frac{1}{13}}, X^{\frac{12}{13}}\right]\,.
\end{equation}
Then \eqref{SigmaalphaX}, \eqref{Intq} and  Lemma \ref{Expsumest} yield
\begin{equation}\label{Sigmaest}
\Sigma(\alpha,\,X)\ll X^{\frac{25}{26}}\log^4X\,.
\end{equation}
Now \eqref{SOmegaSigma}, \eqref{Sigmaest} and Lemma \ref{Omegaest} give us
\begin{equation}\label{Salphaest}
S(\alpha,\,X)\ll X^{\frac{37-12\gamma}{26}}\log^5X\,.
\end{equation}
Let
\begin{equation}\label{mathfrakS}
\mathfrak{S}(t,X)=\min\left\{\left|S(\lambda_{1}t,\,X)\right|,\left|S(\lambda_2 t,\,X)\right|\right\}\,.
\end{equation}
We shall prove the following lemma.
\begin{lemma}\label{mathfrakSest} Let $t,\,X,\,\lambda_1,\,\lambda_2\in \mathbb{R}$,
\begin{equation}\label{tdeltaH}
\Delta\leq|t|\leq H\,,
\end{equation}
where $\Delta$ and $H$ are denoted by
\eqref{Delta} and \eqref{H}, $\lambda_1/\lambda_2 \in \mathbb{R}\backslash \mathbb{Q}$
 and $\mathfrak{S}(t,X)$ is defined by \eqref{mathfrakS}.
Then there exists a sequence of real numbers
$X_1,\,X_2,\ldots \to \infty $ such that
\begin{equation*}
\mathfrak{S}(t, X_j)\ll X_j^{\frac{37-12\gamma}{26}}\log^5X_j\,,\quad j=1,2,\dots\,.
\end{equation*}
\end{lemma}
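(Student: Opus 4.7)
My plan is to argue by contradiction along the sequence $X_j := (q_0^{(j)})^{13/6}$, where $a_0^{(j)}/q_0^{(j)}$ ranges over the infinitely many convergents to $\lambda_1/\lambda_2$ provided by \eqref{lambda12a0q0}; these are precisely the values of $X$ for which the identity \eqref{X} holds, and $X_j\to\infty$. Fixing one such $X = X_j$ and suppressing the index, I would suppose that some $t$ with $\Delta \le |t|\le H$ satisfies $\mathfrak{S}(t,X) > c\, X^{(37-12\gamma)/26}\log^5 X$ for an adequately large constant $c$, and aim to contradict the irrationality of $\lambda_1/\lambda_2$.

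By \eqref{mathfrakS}, this assumption forces $|S(\lambda_i t, X)|$ to exceed the bound of \eqref{Salphaest} for both $i = 1,2$. Applying Dirichlet's theorem with level $Q = X^{12/13}$ to each $\lambda_i t$ produces coprime $a_i, q_i$ with $1\le q_i\le Q$ and $|\lambda_i tq_i - a_i|\le 1/Q$. The contrapositive of \eqref{Salphaest} then rules out $q_i\in[X^{1/13}, X^{12/13}]$, so both $q_i < X^{1/13}$.

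The crucial step is transferring this information to $\alpha := \lambda_1/\lambda_2$. Writing $\lambda_i tq_i = a_i + \delta_i$ with $|\delta_i|\le 1/Q$ and eliminating $t$ yields
\[
\lambda_1 q_1 a_2 - \lambda_2 q_2 a_1 \;=\; \frac{\delta_1 a_2 - \delta_2 a_1}{t},
\]
whence $|\lambda_1 q_1 a_2 - \lambda_2 q_2 a_1|\ll(|a_1|+|a_2|)/(tQ)\ll(q_1+q_2)/Q$, using $|a_i|\ll|\lambda_i|tq_i$ (justified because $t\ge\Delta$ forces $|\lambda_i|tq_i\gg 1/Q$). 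Dividing by $|\lambda_2|$ gives
\[
|q_1 a_2\alpha - q_2 a_1|\ll\frac{q_1 + q_2}{Q}.
\]

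To close, I would invoke the best-approximation property of the convergent $a_0/q_0$. At least one of $a_1, a_2$ must be nonzero, for otherwise $|\lambda_i t|\le 1/Q$ would force $|\lambda_i|\le 1/(Q\Delta) = 1/\log X$, contradicting $\lambda_i\ne 0$ once $X$ is large. Assuming $a_2\ne 0$, the integer $m := q_1|a_2|$ satisfies $1\le m\ll HX^{2/13}\ll X^{5/26}$, strictly below $q_0 = X^{12/26}$, so Legendre's theorem yields $\|m\alpha\|>1/(2q_0)$. Combined with the displayed bound this forces $q_1 + q_2\gg Q/q_0 = X^{6/13}$, contradicting $q_1, q_2 < X^{1/13}$ for $X$ large. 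The main obstacle is verifying the scale chain $HX^{2/13}<q_0<QX^{-1/13}$ that makes the best-approximation step and the final contradiction work simultaneously; once these arithmetic inequalities are checked, the remainder of the argument is essentially formal.
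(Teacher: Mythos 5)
Your proposal is correct and follows essentially the same route as the paper: Dirichlet approximation of both $\lambda_1t$ and $\lambda_2t$ at level $q_0^2=X^{12/13}$, the exponential sum bound \eqref{Salphaest} to exclude denominators in $[X^{1/13},X^{12/13}]$, and a contradiction with the convergent $a_0/q_0$ when both $q_1,q_2<X^{1/13}$. The only cosmetic difference is the endgame, where you invoke the best-approximation property of convergents for $m=q_1|a_2|<q_0$, while the paper directly compares the two fractions $a_0/q_0$ and $a_1q_2/(a_2q_1)$; the numerology ($q_1|a_2|\ll X^{5/26}\ll q_0/\log X$) is the same in both versions.
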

\begin{proof} Our aim is to prove that there exists a sequence $X_1,\,X_2,\,...\to \infty $ such that
for each $j=1,2,\ldots $ at least one of the numbers $\lambda_{1}t$ and $\lambda_{2}t$ with t,
subject to \eqref{tdeltaH} can be approximated by rational numbers with denominators,
satisfying \eqref{Intq}. Then the proof follows from \eqref{Salphaest} and \eqref{mathfrakS}.

Since $\lambda_1,\lambda_2,\lambda_3$ are not all of the same sign one can assume that
$\lambda_1>0,\,\lambda_2>0$ and $\lambda_3<0$.
Let us notice that there exist $a_1,\,q_1\in \mathbb{Z}$,
such that
\begin{equation}\label{lambda1a1q1}
  \bigg|\lambda_1t-\frac{a_1}{q_1}\bigg|<\frac{1}{q_1q_0^2}\,,
  \quad\quad (a_1,\,q_1)=1,\quad\quad 1\leq q_1\leq q_0^2,\quad\quad a_1\ne 0\,.
\end{equation}
From Dirichlet's approximation theorem it follows the existence
of integers $a_1$ and $q_1$, satisfying the first three conditions.
If $a_1=0$ then
\begin{equation*}
|\lambda_1t|< \frac{1}{q_1q_0^2}
\end{equation*}
and \eqref{tdeltaH} gives us
\begin{equation*}
\lambda_1\Delta< \lambda_1|t|< \frac{1}{q_0^2}\,,\quad\quad
q_0^2< \frac{1}{\lambda_1\Delta}\,.
\end{equation*}
The last inequality,  (\ref{X}) and (\ref{Delta}) yield
\begin{equation*}
X^{\frac{12}{13}}<\frac{X^{\frac{12}{13}}}{\lambda_1\log X}\,,
\end{equation*}
which is impossible for large $X$.
Therefore $a_1\ne 0$.
By analogy there exist $a_2,\,q_2\in \mathbb{Z}$,
such that
\begin{equation}\label{lambda2a2q2}
  \bigg|\lambda_2t-\frac{a_2}{q_2}\bigg|<\frac{1}{q_2q_0^2}\,,
  \quad\quad (a_2,\,q_2)=1,\quad\quad 1\leq q_2\leq q_0^2,\quad\quad a_2\ne 0\,.
\end{equation}
If $q_i\in\left[X^{\frac{1}{13}},\,X^{\frac{12}{13}}\right]$
for $i=1$ or $i=2$, then the proof is completed.
By \eqref{X}, \eqref{lambda1a1q1} and \eqref{lambda2a2q2} we deduce
\begin{equation*}
q_i\le X^{\frac{12}{13}}=q_0^2\,,\quad i=1,2\,.
\end{equation*}
It remains to show that the case $q_i<X^{\frac{1}{13}}\,,i=1,2$ is impossible.
Assume that
\begin{equation}\label{impossible}
q_i<X^{\frac{1}{13}}\,,\quad i=1,2\,.
\end{equation}
From \eqref{varepsilon}, \eqref{H}, \eqref{tdeltaH} -- \eqref{impossible}
it follows
\begin{align}
  & 1\le |a_i|<\frac{1}{q_0^2}+q_i\lambda_i|t|< \frac{1}{q_0^2}+q_i\lambda_i H\,,\nonumber\\
\label{ai}
& 1\le |a_i|<\frac{1}{q_0^2}+\lambda_iX^{\frac{38\gamma-35}{26}}(\log X)^{-8}\,,\quad i=1,\,2\,.
\end{align}
We have
\begin{equation}\label{lambda12}
  \frac{\lambda_1}{\lambda_2}=\frac{\lambda_1t}{\lambda_2t}=
  \frac{\frac{a_1}{q_1}+\bigg(\lambda_1t-\frac{a_1}{q_1}\bigg)}{\frac{a_2}{q_2}+\bigg(\lambda_2t-\frac{a_2}{q_2}\bigg)}=
  \frac{a_1q_2}{a_2q_1}\cdot\frac{1+\mathfrak{X}_1}{1+\mathfrak{X}_2}\,,
\end{equation}
where
\begin{equation}\label{mathfrakX}
\mathfrak{X}_i=\dfrac{q_i}{a_i}\bigg(\lambda_it-\dfrac{a_i}{q_i}\bigg)\,,\; i=1,\,2.
\end{equation}
Bearing in mind  \eqref{lambda1a1q1}, \eqref{lambda2a2q2}, \eqref{lambda12} and \eqref{mathfrakX} we get
\begin{align}
  &|\mathfrak{X}_i|< \frac{q_i}{|a_i|}\cdot \frac{1}{q_iq_0^2}=\frac{1}{|a_i|q_0^2}\le \frac{1}{q_0^2}\,,\quad i=1,2\,,\nonumber\\
\label{lambd12}
  &\frac{\lambda_1}{\lambda_2}=\frac{a_1q_2}{a_2q_1}\cdot
  \frac{ 1+\mathcal{O}\bigg(\frac{1}{q_0^2}\bigg)}{ 1+\mathcal{O}\bigg(\frac{1}{q_0^2}\bigg)}=
 \frac{a_1q_2}{a_2q_1}\bigg(1+\mathcal{O}\bigg(\frac{1}{q_0^2}\bigg)\bigg)\,.\notag
\end{align}
Thus
\begin{equation*}
\frac{a_1q_2}{a_2q_1}=\mathcal{O}(1)
\end{equation*}
and
\begin{equation}\label{lambd12new}
\frac{\lambda_1}{\lambda_2}=\frac{a_1q_2}{a_2q_1}+\mathcal{O}\bigg(\frac{1}{q_0^2}\bigg)\,.
\end{equation}
Therefore, both fractions $\displaystyle \frac{a_0}{q_0}$ and $\displaystyle  \frac{a_1q_2}{a_2q_1}$
approximate  $\displaystyle  \frac{\lambda_1}{\lambda_2}$.
Using \eqref{X}, \eqref{lambda1a1q1}, \eqref{impossible} and inequality \eqref{ai} with $i=2$ we obtain
\begin{equation}\label{a2q1}
  |a_2|q_1<1+\lambda_2X^{\frac{38\gamma-33}{26}}(\log X)^{-8}<\frac{q_0}{\log X}\,.
\end{equation}
Consequently $|a_2|q_1\ne q_0$  and  $\displaystyle  \frac{a_0}{q_0}\neq\frac{a_1q_2}{a_2q_1}$.
Now \eqref{a2q1} implies
\begin{equation}\label{contradiction}
  \bigg|\frac{a_0}{q_0}-\frac{a_1q_2}{a_2q_1}\bigg|=
  \frac{|a_0 a_2q_1-a_1q_2q_0|}{|a_2|q_1q_0}\ge \frac{1}{|a_2|q_1q_0}>\frac{\log X}{q_0^2}\,.
\end{equation}
On the other hand, from \eqref{lambda12a0q0} and \eqref{lambd12new} we deduce
\begin{equation*}
  \bigg|\frac{a_0}{q_0}-\frac{a_1q_2}{a_2q_1}\bigg|\le \bigg|\frac{a_0}{q_0}-\frac{\lambda_1}{\lambda_2}\bigg|+
  \bigg|\frac{\lambda_1}{\lambda_2}-\frac{a_1q_2}{a_2q_1}\bigg|\ll \frac{1}{q_0^2}\,,
\end{equation*}
which contradicts \eqref{contradiction}.
This rejects the assumption \eqref{impossible}.
Let $q_0^{(1)},\,q_0^{(2)},\,\ldots$ be an infinite sequence of values of $q_0$, satisfying \eqref{lambda12a0q0}.
Then using \eqref{X} one gets an infinite sequence $X_1,\,X_2,\,\ldots $ of values of $X$, such that
at least one of the numbers $\lambda_{1}t$ and $\lambda_{2}t$ can be approximated by
rational numbers with denominators, satisfying \eqref{Intq}.
Hence, the proof is completed.
\end{proof}

Taking into account \eqref{Gamma2}, \eqref{mathfrakS}, Lemma \ref{Fourier}
and Lemma \ref{mathfrakSest}  we deduce
\begin{align}\label{Gamma2est1}
\Gamma_2(X_j)&\ll\varepsilon\int\limits_{\Delta\leq|t|\leq H}\mathfrak{S}(t, X_j)
\Big(\big|S(\lambda_1 t, X_j)S(\lambda_3 t, X_j)\big|
+\big|S(\lambda_2 t, X_j)S(\lambda_3 t, X_j)\big|\Big)\,dt\nonumber\\
&\ll\varepsilon\int\limits_{\Delta\leq|t|\leq H}\mathfrak{S}(t, X_j)
\Big(\big|S(\lambda_1 t, X_j)\big|^2
+\big|S(\lambda_2 t, X_j)\big|^2+\big|S(\lambda_3 t, X_j)\big|^2\Big)\,dt\nonumber\\
&\ll\varepsilon X_j^{\frac{37-12\gamma}{26}}(\log X_j)^5T_k\,,
\end{align}
where
\begin{equation*}
T_k=\int\limits_{\Delta}^H\big|S(\lambda_k t, X_j)\big|^2\,dt.
\end{equation*}
Using Lemma \ref{intSintI} (iii) and working as in (\cite{DimTod}, p. 17 -- 18) we obtain
\begin{equation}\label{Tkest}
T_k\ll HX_j^{2-\gamma}\log^2X_j\,.
\end{equation}
From \eqref{varepsilon}, \eqref{H}, \eqref{Gamma2est1}, \eqref{Tkest} we get
\begin{equation}\label{Gamma2est2}
\Gamma_2(X_j)\ll X_j^{\frac{37-12\gamma}{26}}X_j^{2-\gamma}\log^9X_j
\ll  X_j^{\frac{89-38\gamma}{26}}\log^9X_j\ll\frac{\varepsilon X_j^2}{\log X_j}\,.
\end{equation}

\section{Upper bound of $\mathbf{\Gamma_3(X)}$}\label{SectionGamma3}
\indent

By \eqref{SalphaX}, \eqref{Gamma3} and Lemma \ref{Fourier} it follows
\begin{equation}\label{Gamma3est1}
\Gamma_3(X)\ll X^{3-3\gamma}\int\limits_{H}^{\infty}\frac{1}{t}\bigg(\frac{k}{2\pi t\varepsilon/8}\bigg)^k \,dt
=\frac{X^{3-3\gamma}}{k}\bigg(\frac{4k}{\pi\varepsilon H}\bigg)^k\,.
\end{equation}
Choosing $k=[\log X]$ from \eqref{H} and \eqref{Gamma3est1} we obtain
\begin{equation}\label{Gamma3est}
\Gamma_3(X)\ll1\,.
\end{equation}

\section{Proof of the Theorem}\label{Sectionfinal}
\indent

Summarizing  \eqref{varepsilon}, \eqref{Gammadecomp}, \eqref{Gamma1est},
\eqref{Gamma2est2} and \eqref{Gamma3est} we deduce
\begin{equation*}
\Gamma(X_j)\gg\varepsilon X_j^2=X_j^{\frac{89-38\gamma}{26}}\log^{10}X_j\,.
\end{equation*}
The last estimation implies
\begin{equation}\label{Lowerbound}
\Gamma(X_j) \rightarrow\infty \quad \mbox{ as } \quad X_j\rightarrow\infty\,.
\end{equation}
Bearing in mind  \eqref{Gamma} and \eqref{Lowerbound} we establish Theorem \ref{Theorem}.

\vskip20pt
\footnotesize
\begin{flushleft}
S. I. Dimitrov\\
Faculty of Applied Mathematics and Informatics\\
Technical University of Sofia \\
8, St.Kliment Ohridski Blvd. \\
1756 Sofia, BULGARIA\\
e-mail: sdimitrov@tu-sofia.bg\\
\end{flushleft}

\end{document}